\documentclass[11pt,a4paper]{article}
\usepackage[T1]{fontenc}
\usepackage{lmodern}
\usepackage[margin=28mm]{geometry}
\usepackage{amsmath,amssymb,amsthm,mathtools}
\usepackage{microtype,booktabs,enumitem}
\usepackage[hidelinks]{hyperref}
\usepackage{fancyhdr}

\numberwithin{equation}{section}
\newtheorem{theorem}{Theorem}[section]
\newtheorem{conjecture}{Conjecture}[section]
\newtheorem{proposition}[theorem]{Proposition}
\newtheorem{lemma}[theorem]{Lemma}
\theoremstyle{remark}\newtheorem{remark}[theorem]{Remark}
\newcommand{\Ham}{\operatorname{Ham}}
\newcommand{\Fix}{\operatorname{Fix}}
\newcommand{\Crit}{\operatorname{Crit}}
\newcommand{\supp}{\operatorname{supp}}
\newcommand{\id}{\operatorname{id}}
\newcommand{\R}{\mathbb R}
\newcommand{\dd}{\mathrm d}
\title{A Smooth Hamiltonian Diffeomorphism with Two \\Fixed Points on $S^2\times S^2$}

\author{Hao Jiao\\
Qiuzhen College, Tsinghua University\\
\texttt{jiaoh21@mails.tsinghua.edu.cn}}
\date{}

\begin{document}

\maketitle
\begin{abstract}
We give a construction of a Hamiltonian diffeomorphism of the equal-area symplectic product $S^2\times S^2$ with two fixed points. The argument starts with a four-fixed-point map, passes to rescaled cotangent coordinates near the antidiagonal, and reduces the local fixed-point equations to two planar saddle problems. A relative modification merges each pair of saddles, and mixed generating functions are used to transfer the modification to the original map. We include detailed explanations of the normal form along a level arc, the Moser equation, uniform existence of mixed generating functions, the order of parameter choices, and the global graph argument. This construction gives a counterexample to the degenerate version of Arnold's conjecture proposed by Arnold\cite{Arnold86} when the manifold is standard $S^2\times S^2$.

This work is done with ChatGPT. The author gave the idea to study the limit behavior of $\Phi_\rho$. GPT suggested a key idea of using second generating function to  construct this Hamiltonian. Also, most of the computation is done by GPT.
\end{abstract}
\clearpage
\tableofcontents
\clearpage

\section{Introduction}

It was proposed by Arnold at 1986\cite{Arnold86} the degenerate version Arnold's conjecture

\begin{conjecture}
 Let $(M,\omega)$ be a closed symplectic manifold. For all Hamiltonian diffeomorphisms, the number of fixed points of $\varphi$ is greater than or equal to the number of critical points of some smooth function M, namely
    \begin{center}
        \#\{critical points of some f\} $\leq$ \#\{fixed points of $\varphi$\}
   \end{center}    
\end{conjecture}

A variant that gives a lower bound by the sum of Betti numbers when the  Hamiltonian is nondegenerate is totally solved by a series of works\cite{ConleyZehnder1983}\cite{Floer1989}\cite{LeOno}\cite{Ono1995}\cite{HoferSalamon1995}\cite{FukayaOno}\cite{AbouBlum21}\cite{Xu-Bai}\cite{Rezchikov}. This paper studies the degenerate version conjecture in case $M$ is the standard $S^2\times S^2$. Namely, let $(S^2,\omega_{S^2})$ be the unit sphere of total area $4\pi$, and put
\begin{center}
$(M,\omega)=\bigl(S^2\times S^2,\pi_1^*\omega_{S^2}+\pi_2^*\omega_{S^2}\bigr).$
\end{center}

\begin{theorem}\label{thm:claim}
There exists a Hamiltonian diffeomorphism $\Phi$ of $(M,\omega)$ with exactly two fixed points. It is the time-one map of a smooth Hamiltonian $H\in C^\infty((\R/\mathbb Z)\times M)$.
\end{theorem}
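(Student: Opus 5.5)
The plan is to start from a Hamiltonian diffeomorphism with four nondegenerate fixed points and then cancel them in pairs by a local modification. For the initial map, take a rotation-type product map: for $\rho$ small, let $\Phi_\rho=\varphi^1_{H_\rho}$ with $H_\rho=\rho\,(h\circ\pi_1+h\circ\pi_2)$, where $h$ is the height function on $S^2$. A better choice is a map coupling the two factors, for instance a rotation flowing the diagonal toward the antidiagonal, arranged so that the fixed points are the four points $(N,N),(N,S),(S,N),(S,S)$. It is also arranged that the fixed points of index $1$ and $3$ lie close to the antidiagonal $\bar\Delta=\{(x,-x)\}$. Since $\bar\Delta$ is Lagrangian for the equal-area form, Weinstein's theorem identifies a neighbourhood of $\bar\Delta$ with a neighbourhood of the zero section in $T^*S^2$. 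Near the relevant points we rescale the cotangent coordinates by $\rho$, so that in the limit $\rho\to0$ the map $\Phi_\rho$ becomes $\rho$-close to the time-$\rho$ flow of an autonomous function on $T^*S^2$.

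In these coordinates we describe $\Phi_\rho$ by a mixed (second-type) generating function $S_\rho(q,P)$, using $q$ on $\bar\Delta$ and $P$ in the fibre. Fixed points of $\Phi_\rho$ then correspond to critical points of $F_\rho(q,p)=S_\rho(q,p)-\langle q,p\rangle$, suitably interpreted in local charts. The uniform existence of $S_\rho$ for small $\rho$ follows from $C^1$-closeness of $\Phi_\rho$ to the identity after rescaling. The key computation is a normal form along a level arc. The rescaled fixed-point equations should decouple, after a Moser-type coordinate change that straightens the symplectic form along this arc, into two planar problems. Each planar problem has a pair of nondegenerate saddle-type zeros of index $\pm1$ for the planar vector field, one index $+1$ and one index $-1$ in each pair, connected by a level arc of the planar function.

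Next comes a relative modification in each plane. We replace the planar function by one that agrees with it outside a compact neighbourhood of the connecting arc and has no critical points inside it. This is possible because the two zeros have opposite index, so the relative Euler number is zero. The construction is the standard birth–death cancellation, realised explicitly by adding a compactly supported bump that tilts the arc. We then transfer this back by defining a new generating function $\tilde S_\rho=S_\rho+\chi\cdot(\text{modification})$. Mixed generating functions correspond bijectively to exact symplectic maps $C^1$-close to the identity, so $\tilde S_\rho$ defines a Hamiltonian diffeomorphism $\Phi$ that agrees with $\Phi_\rho$ outside a small ball. This $\Phi$ is smooth and Hamiltonian-isotopic to $\Phi_\rho$ through a path supported in the ball, and we take $H$ to be the concatenated, reparametrised, time-periodic Hamiltonian.

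The global graph argument then has to check that no new fixed points appear. Outside the modification region $\Phi=\Phi_\rho$, whose only fixed points are the original four. Inside, fixed points are exactly the critical points of the modified $F$, which we have made free of critical points except for the two surviving ones. The main obstacle is the order of parameter choices: we must first fix the planar cutoff, then choose $\rho$ small enough that the error between $F_\rho$ and its limit planar model is $C^1$-smaller than the gradient lower bound of the modified model on the annular transition region. Only then can the generating-function description be invoked uniformly. I expect the decoupling normal form, together with keeping the $C^1$ errors uniform there, to be the hardest step. I would first try to obtain it by an explicit Taylor expansion of the rescaled map in $\rho$.
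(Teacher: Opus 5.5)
Your cancellation step cannot work, and it is the heart of the construction. A Hamiltonian diffeomorphism of $M$ is homotopic to the identity, so its Lefschetz number is $\chi(S^2\times S^2)=4$. If $\Phi_\rho$ has exactly four nondegenerate fixed points, their local indices $\operatorname{sgn}\det(I-D\Phi_\rho)=\pm1$ sum to $4$. Hence all four equal $+1$, and there is no opposite-index pair. The paper's data agree: $\operatorname{ind}D^2W_{0,+}=3$, $\operatorname{ind}D^2W_{0,-}=1$ and $\det S_{uV}<0$ give $\operatorname{sgn}(\det D^2W/\det S_{uV})=+1$ at all four points.

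In a chart of this kind, the fixed-point index is a fixed sign times the planar index of the reduced function. Here $\operatorname{sgn}\det(I-D\Phi)=-\operatorname{sgn}\det D^2g$, since $W_{uu}$ is definite and $\det S_{uV}<0$. So the two reduced critical points in each chart have the \emph{same} planar index. In the paper they are two saddles of $g_\pm$, and the relative index of each pair is $-2$, not $0$.

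Birth--death cancellation is therefore unavailable. Removing both points of both pairs would leave a map with no fixed points, contradicting $L=4$; it is also inconsistent with your own count of ``two surviving ones''. Note too that a nondegenerate planar saddle always has index $-1$, and a birth--death pair is joined by a gradient line, not a level arc.

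The missing idea is to \emph{merge} each pair into one degenerate fixed point carrying index $+2$. The paper uses that the two saddles of $g_\pm$ have the same critical value $c_s$ (they are exchanged by $V\mapsto-V$) and are joined by an embedded regular level arc. It normalizes the function along the whole arc to $t(t^2+1-s^2)$ by a Moser isotopy of \emph{functions}, $f_\lambda\circ\phi_\lambda=f_0$. The compactly supported family $t\bigl[t^2+1-s^2-b\,\alpha(s)\beta(t)\bigr]$ then has, at $b=1$, a single critical point with monkey-saddle germ $t(t^2-s^2)$ of planar index $-2$. The decoupling into planar problems comes from eliminating $u$ using the definite block $W_{uu}$, not from straightening the symplectic form.

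Your reliance on closeness to the identity is a second, independent obstruction. If the final map were $C^1$-close to the identity on $M$, its graph would be the graph of $\dd F$ in a Weinstein neighbourhood of the diagonal, since $H^1(M;\R)=0$. Its fixed points would then be the critical points of $F$, and there are at least $\operatorname{cl}(M)+1=3$ of them. So a small product rotation cannot serve as the starting map, whatever surgery is done in that regime.

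The paper's map $\tau_\rho\circ\sigma\circ(R_1\times R_2)$ is far from the identity and differs from your setup in four ways:
\begin{itemize}
\item its four fixed points cluster in same-index pairs near $p_\pm$ on the antidiagonal, whereas your $(N,N)$ and $(S,S)$ lie on the diagonal;
\item its rescaled limit is a fixed linear symplectic map followed by the twist $(u,v)\mapsto(u+\nabla h(v),v)$;
\item the mixed generating function exists because $\partial_vP_\rho\approx D$ is invertible, not because the map is near the identity;
\item showing that the modified Lagrangian immersion is a graph requires the separate covering/degree argument.
\end{itemize}
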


The construction uses a generating function to describe the fixed points of time-one map. By doing some operations on the generating function, then we can work backward to obtain a Hamiltonian we need.

We remark that there are several works on degenerate version of Arnold conjecture.
For example it follows from Givental's work \cite{Givental} that for any periodic Hamiltonian on standard $S^2\times S^2$,
its time one map has at least two fixed points.  (See \cite{HaoJiao26} for  other related works.)
Thus the number two in Theorem 1.1 is optimal.

Theorem 1.1 shows that the cup length estimate is false in the case $S^2\times S^2$ and  so gives a counter example to Conjecture 1.1.

It is still unknown whether cup length estimate holds for other examples

\section{The four-fixed-point starting map}

In this section, we first give the expression of a family of  Hamiltonians on $M$ with four fixed points,  and compute the limit behavior up to scaling.

\subsection{Construction of a series of four-fixed-point map}
Write a point of $M$ as $(j,k)\in\R^3\times\R^3$, with $|j|=|k|=1$, let $e_z$ be the normal vector in z direction. Set
\begin{center}
$\sigma(j,k)=(k,j),\qquad b_1=(\sqrt3/2,-1/2,0),\quad b_2=(\sqrt3/2,1/2,0),$\\$
p_+=(e_z,-e_z),\qquad p_-=(-e_z,e_z).$
\end{center}
Let $R_i$ be rotation by $\pi$ around $b_i$ and $F = \sigma\circ(R_1\times R_2)$. Fix a smooth nonincreasing function $\kappa:[0,\infty)\to[0,100]$ such that
\begin{center}
    $\kappa(R)=100\quad(R\leq5/4),\qquad \kappa(R)=0\quad(R\geq7/4),$
\end{center}
strictly decreasing between these radii.
For $0<\rho<1/100$, choose a smooth nonincreasing $\chi_\rho$ with
\begin{center}
$\chi_\rho(r)=1\quad(0\leq r\leq\rho/2),\qquad
\chi_\rho(r)=\rho\kappa(r/\rho)\quad(r\geq\rho).$
\end{center}
Use an endpoint-flat monotone interpolation on $[\rho/2,\rho]$.

Put $S=j+k$ and $r=|S|$. For $r>0$, let $\tau_\rho$ rotate $j$ and $k$ simultaneously around $S/r$ through angle $\pi\chi_\rho(r)$. Extend it by $\sigma$ at $r=0$. For $r\leq\rho/2$ the rotation interchanges $j$ and $k$, so this extension is smooth.
\begin{proposition}
The map $\Phi_\rho=\tau_\rho\circ F$ is Hamiltonian.
\end{proposition}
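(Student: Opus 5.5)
Since $M=S^2\times S^2$ is simply connected, $H^1(M;\R)=0$, so every symplectic diffeomorphism isotopic to the identity through symplectomorphisms is Hamiltonian. The plan is therefore to show separately that $F$ and $\tau_\rho$ are symplectic, that each is isotopic to the identity through symplectomorphisms, and then use that $\Ham(M,\omega)$ is a group, so $\Phi_\rho=\tau_\rho\circ F$ is Hamiltonian. Better, I will write down explicit Hamiltonians wherever possible, which is also what the later construction needs.

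\emph{Step 1: $F$.} The map $R_1\times R_2$ is a product of rotations. Each rotation $R_i$ is the time-$\pi$ map of the Hamiltonian $x\mapsto \langle x,b_i\rangle$ on $(S^2,\omega_{S^2})$, since rotation about a unit axis is generated by the height function along that axis. Hence $R_1\times R_2$ is Hamiltonian. The swap $\sigma$ preserves $\omega$ because the two factors carry equal areas. I will show that $\sigma$ is Hamiltonian through the observation already in the statement: on the region $r\le\rho/2$, the simultaneous rotation by $\pi$ about $S/r$ equals $\sigma$. Globally, consider the $SO(3)$-moment map $\mu(j,k)=j+k=S$ of the diagonal action. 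The function $h=|S|=r$ is smooth away from $r=0$, and its flow rotates both factors about $S/r$ at unit speed (up to a normalization constant), with $r$ conserved. I will therefore take a Hamiltonian $G(r)$ with $G'(r)=c$ near the antidiagonal and handle the remaining locus carefully. A cleaner route is to note that $\sigma$ equals the map $\tau$ obtained with $\chi\equiv1$. So $\sigma$ is a particular instance of the family of Step 2, and I will treat it there.

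\emph{Step 2: $\tau_\rho$ (and more generally $\tau_\chi$ for any smooth $\chi$ constant near $r=0$).} Take the Hamiltonian $K(j,k)=g(r)$ with $g'(r)=\pi\chi(r)$ (suitably normalized). I will compute $X_K$: by equivariance, $dK=g'(r)\,\langle S/r, dS\rangle$, and $\langle v,\mu\rangle$ generates infinitesimal diagonal rotation about $v$. Therefore the flow of $K$ on the invariant level $\{r=\text{const}\}$ is the diagonal rotation about the fixed axis $S/r$ with angular speed $g'(r)$, and the time-one map is exactly $\tau_\chi$. The key point is smoothness of $K$ at $r=0$, the antidiagonal, where $r$ is not smooth. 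There $\chi\equiv1$, so $g(r)=\pi r+\text{const}$, which is only Lipschitz. This is the main obstacle. To get around it, I will exploit the fact that rotation by angle $\pi$ and by angle $-\pi$ coincide. Choose instead $g'(r)=\pi\chi(r)$ for $r\ge\rho/2$, and near $0$ use an odd extension in the sense of the angle function $\theta(r)$ with $\theta(r)\equiv\pi$ near $0$. I then replace the isotopy: the time-$t$ maps $\tau_{t\chi}$ are not smooth at $r=0$ for $t<1$. So instead I will show directly that $\tau_\rho$ is symplectic and isotopic to $\id$ through symplectomorphisms, via the path $\tau_{\chi_s}$ with $\chi_s=s\chi_\rho+(1-s)$. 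Each $\tau_{\chi_s}$ is smooth because $\chi_s=1$ near $r=0$, and symplectic since away from $r=0$ it is the time-one flow of $g_s(r)$, while near $r=0$ it equals $\sigma$, which is symplectic. This connects $\tau_\rho$ to $\tau_{\chi\equiv1}=\sigma$.

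\emph{Step 3: $\sigma$ and conclusion.} It then remains to show that $\sigma$ is isotopic to $\id$ in $\mathrm{Symp}(M)$; by $H^1=0$ this suffices. I would show this via $\sigma=(\id\times A)\circ\sigma'\circ(\id\times A)$, where $A$ is the antipodal-type rotation, or more concretely by checking that $\sigma$ acts trivially on $H_2(M)$ up to the symplectomorphism group. Note, however, that $\sigma$ swaps the classes $[S^2\times pt]$ and $[pt\times S^2]$, so it is \emph{not} isotopic to the identity. The correct observation is therefore that $\tau_\rho\circ F$ contains $\sigma$ twice: $\tau_\rho=\sigma\circ\psi$ with $\psi:=\sigma\circ\tau_\rho$ isotopic to the identity, and $F=\sigma\circ(R_1\times R_2)$. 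Hence
\[
\Phi_\rho=\sigma\,\psi\,\sigma\,(R_1\times R_2)=(\sigma\psi\sigma)(R_1\times R_2).
\]
Conjugation by a symplectomorphism preserves $\Ham$, so $\sigma\psi\sigma$ is Hamiltonian, and the product is Hamiltonian. The remaining check, which I expect to be the delicate point, is that $\psi=\sigma\tau_\rho$ is a genuine smooth Hamiltonian isotopy. For this I use the family $\sigma\tau_{\chi_s}$ from $\sigma\sigma=\id$ to $\psi$. Near $r=0$ it is the identity for all $s$, so it is generated by $g_s(r)-\pi r$, which vanishes near $r=0$ and is therefore smooth.
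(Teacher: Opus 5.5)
Your final argument in Step 3 is correct and is essentially the paper's proof. The paper shows that $\tau_\rho\circ\sigma$ (equal to your $\psi=\sigma\circ\tau_\rho$, since both are rotations about the axis $S/r$) rotates by $\pi(\chi_\rho(r)+1)\equiv\pi(\chi_\rho(r)-1)\pmod{2\pi}$, hence is the time-one map of $K_\rho=\pi\int_0^r(\chi_\rho(s)-1)\,ds$, which vanishes near the antidiagonal; it then writes $\Phi_\rho=\varphi^1_{K_\rho}\circ(R_1\times R_2)$, so your conjugation by $\sigma$ is harmless but unnecessary. You should delete the opening plan and the stated aims of Steps 1--2, namely that $F$, $\sigma$ or $\tau_\rho$ is individually isotopic to the identity; these are false for the homological reason you identify yourself. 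You should also give the $2\pi$ angle shift, not the fact that the map is the identity near $r=0$, as the reason that $\sigma\tau_{\chi_s}$ is the time-one map of $g_s(r)-\pi r$.
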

\begin{proof}
Equip $M=S^2\times S^2$ with the product symplectic form $\omega=\omega_{S^2}\oplus\omega_{S^2}$, where $(\omega_{S^2})_j(\xi,\eta)=j\cdot(\xi\times\eta)$. We use the convention $\iota_{X_H}\omega=dH$. For a unit vector $a$, let $R_{a,\theta}$ denote the right-handed rotation through angle $\theta$ about $a$. Set $S=j+k$ and $r=|S|$, and define
\[K_\rho(j,k)=K_\rho(r)=\pi\int_0^r\bigl(\chi_\rho(s)-1\bigr)\,ds.\]
Since $\chi_\rho=1$ on $[0,\rho/2]$, the function $K_\rho$ vanishes on a neighborhood of the anti-diagonal $\Delta_-=\{(j,-j):j\in S^2\}$. Away from $\Delta_-$, the function $r$ is smooth. Thus $K_\rho$ is smooth on all of $M$, despite the nonsmoothness of $r$ along $\Delta_-$.

Suppose $r>0$ and write $a=S/r$. For $(\xi,\eta)\in T_jS^2\oplus T_kS^2$, differentiation gives
\[dr(\xi,\eta)=\frac{S\cdot(\xi+\eta)}{r}=a\cdot(\xi+\eta),\]
and therefore
\[dK_\rho(\xi,\eta)=\pi\bigl(\chi_\rho(r)-1\bigr)a\cdot(\xi+\eta).\]
Since $j\cdot\xi=0$, the vector triple-product identity yields
\[(a\times j)\times\xi=j(a\cdot\xi)-a(j\cdot\xi)=j(a\cdot\xi).\]
Consequently,
\[(\omega_{S^2})_j(a\times j,\xi)=j\cdot\bigl((a\times j)\times\xi\bigr)=a\cdot\xi.\]
The same calculation applies to the second factor. Hence
\[X_{K_\rho}(j,k)=\pi\bigl(\chi_\rho(r)-1\bigr)\bigl(a\times j,a\times k\bigr).\]
Along a Hamiltonian trajectory, we have
\[\dot S=\dot j+\dot k=\pi\bigl(\chi_\rho(r)-1\bigr)\frac{S}{r}\times S=0.\]
Thus $S$, $r$, and $a$ remain constant. Writing $c=\pi(\chi_\rho(r)-1)$, the Hamiltonian equations become $\dot j=c\,a\times j$ and $\dot k=c\,a\times k$. Their solution is
\[\varphi_{K_\rho}^{\,t}(j,k)=\bigl(R_{a,tc}j,R_{a,tc}k\bigr).\]
In particular, the time-one map rotates both factors about $S/r$ through angle $\pi(\chi_\rho(r)-1)$.

We now compare this map with $\tau_\rho\circ\sigma$, where $\sigma(j,k)=(k,j)$. Since $|j|=|k|=1$, we have

\[R_{a,\pi}j=k,\]
and similarly $R_{a,\pi}k=j$. Thus, whenever $r>0$, the swap $\sigma$ is simultaneous rotation of both factors through angle $\pi$ about $a$.

By definition, $\tau_\rho$ rotates both factors about the same axis through angle $\pi\chi_\rho(r)$. Since $\sigma$ preserves $S$ and $r$, their composition is
\[(\tau_\rho\circ\sigma)(j,k)=\bigl(R_{a,\pi(\chi_\rho(r)+1)}j,R_{a,\pi(\chi_\rho(r)+1)}k\bigr).\]
The two relevant angles differ by $2\pi$:
\[\pi\bigl(\chi_\rho(r)+1\bigr)-\pi\bigl(\chi_\rho(r)-1\bigr)=2\pi.\]
Rotations of vectors in $\mathbb R^3$ are $2\pi$-periodic, so $\tau_\rho\circ\sigma=\varphi_{K_\rho}^{\,1}$ on $M\setminus\Delta_-$.

Near $\Delta_-$, we have $\chi_\rho=1$ and hence $\tau_\rho=\sigma$, including the prescribed extension at $r=0$. Therefore $\tau_\rho\circ\sigma=\operatorname{id}$ there. Since $K_\rho$ vanishes on the same neighborhood, its Hamiltonian flow is also the identity there. The equality consequently holds on all of $M$, and we conclude that
\[\tau_\rho\circ\sigma=\varphi_{K_\rho}^{\,1}\in\operatorname{Ham}(M,\omega).\]

And of course the rotation $R_1\times R_2$ is a Hamiltonian map, Thus
\begin{center}
$\Phi_\rho=(\tau_\rho\sigma)\circ(R_1\times R_2)
=\varphi_{K_\rho}^1\circ(R_1\times R_2)\in\Ham(M,\omega).$
\end{center}
\end{proof}

\begin{proposition}
For sufficiently small $\rho>0$, $\Phi_\rho$ has exactly four fixed points, all nondegenerate, and pairwise approaching each near $p_+$ and $p_-$.
\end{proposition}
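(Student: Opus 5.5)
The plan is to locate all fixed points of $\Phi_\rho=\tau_\rho\circ F$ by region, according to the value of $r'=|S'|$ where $(j',k')=F(j,k)$ and $S'=j'+k'$. Then I will blow up near $p_\pm$ at scale $\rho$ and pass to a limit problem that can be solved explicitly and nondegenerately. Finally I will transfer the result to small $\rho$ by the implicit function theorem together with a compactness argument.

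\emph{Step 1: localization.} Here $F(j,k)=(R_2k,R_1j)$. Where $r'\ge 7\rho/4$ we have $\chi_\rho=0$, so $\tau_\rho=\id$ and a fixed point must be a fixed point of $F$. The conditions $j=R_2k$ and $k=R_1j$ give $R_2R_1j=j$. The map $R_2R_1$ is rotation about $e_z$ through $2\angle(b_1,b_2)=2\pi/3$, so $j=\pm e_z$ and $k=R_1j=-j$. These are exactly $p_\pm$, but they have $r'=0$, which is a contradiction. Where $r'\le\rho/2$ we have $\tau_\rho=\sigma$, so $\Phi_\rho=R_1\times R_2$. Its fixed points $(\pm b_1,\pm b_2)$ have $r$ bounded away from $0$, and $\tau_\rho$ preserves $S$, so a fixed point satisfies $S=S'$. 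Hence every fixed point lies in the shell $\rho/2<r=r'<7\rho/4$. A continuity argument shows that for small $\rho$ this shell meets $\Fix$ only near $p_\pm$. Indeed, fixed points must satisfy $|\Phi_\rho(x)-F(x)|=O(\rho)$, and $F$ has only the fixed points $p_\pm$; both of these are nondegenerate, since $dF$ at $p_\pm$ has no eigenvalue $1$, as one checks from the $2\pi/3$ rotation.

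\emph{Step 2: rescaled limit.} Near $p_+$, write $j=e_z+\rho u+O(\rho^2)$ and $k=-e_z+\rho v+O(\rho^2)$ with $u,v\in\R^2$ horizontal, and put $w=u+v$, which is the rescaled $S$. The rotation in $\tau_\rho$ has angle $\pi\rho\kappa(|w'|)$ about the horizontal axis $a'=w'/|w'|$. It therefore moves $(j',k')$ by $\pi\rho\kappa(|w'|)(a'\times e_z,\,-a'\times e_z)+O(\rho^2)$, which is of the same order as the rescaling. Thus $\rho^{-1}$-rescaled $\Phi_\rho$ converges in $C^1$ on compact sets avoiding $w'=0$ to the limit map
\[
\Psi(u,v)=dF_{p_+}(u,v)+\pi\kappa(|w'|)\bigl(a'\times e_z,\,-a'\times e_z\bigr),
\]
where $(u',v')=dF_{p_+}(u,v)$ and $w'=u'+v'$. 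The corresponding limit at $p_-$ is obtained in the same way. The fixed point equations for $\Psi$ split into the equation $w'=w$ and an equation for the difference $u-v$. The correction term is antisymmetric, so it does not change $w$, and the first equation is a linear condition on $w$ and $u-v$. Eliminating $u-v$ reduces everything to a planar equation for $w$. I expect this equation to take the form
\[
\pi\kappa(|w|)\,Jw/|w| = Aw
\]
for an explicit linear map $A$ coming from $R_1$ and $R_2$. Solving it should give a radial condition $\kappa(|w|)=c$ with $c\in(0,100)$, which determines $|w|\in(5/4,7/4)$ because $\kappa$ is strictly decreasing there, together with a direction condition having exactly two antipodal solutions. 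This yields two fixed points of $\Psi$ near each of $p_\pm$, four in total.

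\emph{Step 3: nondegeneracy and persistence.} I will check that $d\Psi-\id$ is invertible at these four points. The radial direction contributes a factor containing $\kappa'(|w|)\ne0$, and the angular and difference directions contribute factors that come from explicit determinants of the linear part. The implicit function theorem, applied to the $C^1$-convergent rescaled family, then gives exactly one nondegenerate fixed point of $\Phi_\rho$ near each limit point. Compactness excludes any other fixed points in the shell. Any such point would rescale to a bounded $w$ with $|w|\in[1/2,7/4]$, and a subsequence would converge to a fixed point of $\Psi$, which must then be one of the four already found. The main obstacle is Step 2: carrying out the explicit linear algebra of $dF_{p_\pm}$ to verify that the reduced planar equation really has exactly two solutions with $\kappa$-value in $(0,100)$, and that the Jacobian is nonzero there. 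I would first compute $dF_{p_+}$ in the basis $e_x,e_y$ of both tangent planes and exploit the $2\pi/3$ rotation structure to diagonalize it.
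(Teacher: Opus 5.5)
\textbf{Gap: the inner part of the shell.} Your route differs from the paper's, but as written it leaves the inner part of the shell uncovered. The route is: localize to $\rho/2<r<7\rho/4$, rescale at $p_\pm$, solve a limit problem, then transfer back by the implicit function theorem and compactness.

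The problem is the formula for $\Psi$. It uses the angle $\pi\rho\kappa(|w'|)$, which is the true angle of $\tau_\rho$ only for $r'\ge\rho$. On $\rho/2<r'<\rho$, $\chi_\rho$ is the monotone interpolation from $\chi_\rho(\rho/2)=1$ down to $\chi_\rho(\rho)=100\rho$. So the angle $\pi\chi_\rho(r')$ sweeps $[100\pi\rho,\pi]$, and near $r'=\rho/2$ the map $\tau_\rho$ is close to the swap $\sigma$. This has three consequences:
\begin{itemize}
\item your estimate $|\Phi_\rho(x)-F(x)|=O(\rho)$ in Step~1 is false there;
\item the rescaled maps do not converge on $\tfrac12<|w'|<1$ (the image can even leave the chart at $p_+$);
\item the compactness argument of Step~3 says nothing about limits with $|w|\in[\tfrac12,1)$.
\end{itemize}
Hence your argument does not exclude fixed points with $\rho/2<r<\rho$.

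\textbf{How to repair it.} The localization near $p_\pm$ can be done without the false estimate. At a fixed point $S=S\circ F=(R_1-R_2)j+R_2S$, so $|(R_1-R_2)j|\le 2r<7\rho/2$, which forces $j$ to be $O(\rho)$-close to $\pm e_z$.

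The inner subshell needs an extra step. For example, $j$ and $R_2k$ are then both $O(\rho)$-close to $\pm e_z$, while $\tau_\rho$ moves $R_2k$ by $2\sin(\theta/2)\sqrt{1-r^2/4}$. Hence $\theta=\pi\chi_\rho(r)=O(\rho)$. Along a subsequence $\theta/\rho\to\mu\ge100\pi$. Your Step~2 elimination, with $\pi\kappa(|w|)$ replaced by the constant $\mu$, then forces $\mu=|w|/\sqrt3$ near $p_+$ or $\mu=\sqrt3|w|$ near $p_-$, which is impossible for $|w|\le1$.

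The paper never meets this difficulty because it does not pass to a limit. $S\circ F=S$ is an exact linear condition at every $\rho$. Together with $S\cdot d=0$ and $|S|^2+|d|^2=4$, it puts $S$ on a coordinate axis. What remains is the exact scalar equation $\pi\chi_\rho(r)=\alpha(r)$, whose left side is nonincreasing and right side strictly increasing, so its unique root handles the interpolation zone automatically.

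\textbf{Step~2 is only anticipated, and slightly off.} You flag this step as the main obstacle; here is what carrying it out gives.
\begin{itemize}
\item In your coordinates near $p_+$, $dF$ acts by the reflections of the horizontal plane across $b_2$ and $b_1$.
\item The sum equation gives $u-v=(-\sqrt3\,w_2,\,-w_1/\sqrt3)$.
\item The difference equation then becomes $w_2\bigl(\sqrt3+\pi\kappa(|w|)/|w|\bigr)=0$ and $w_1\bigl(\pi\kappa(|w|)/|w|-1/\sqrt3\bigr)=0$.
\end{itemize}
So the radial condition is $\pi\kappa(R)=R/\sqrt3$ on the axis $w_2=0$, not $\kappa(|w|)=c$. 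Near $p_-$ it is $\pi\kappa(R)=\sqrt3R$ on $w_1=0$. Either way it still has a unique root in $(5/4,7/4)$, matching the paper's two axes.

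For nondegeneracy, the reduced Jacobian is diagonal, with entries $\sqrt3+1/\sqrt3$ and $\pi\kappa'(R)-1/\sqrt3$ (respectively $\pi\kappa'(R)-\sqrt3$ and $\sqrt3+1/\sqrt3$ at $p_-$). The radial entry is nonzero because $\kappa'\le0$, not because $\kappa'(R)\ne0$; strict decrease of $\kappa$ does not guarantee the latter. This is the same mechanism as the paper's $f'(r_*)<0$.
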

\begin{proof}
Since $\tau_\rho$ preserves $S$, define
\[P:=S\circ\Phi_\rho-S=S\circ F-S=\left(-\frac{x+\sqrt{3}d_y}{2},-\frac{3y+\sqrt{3}d_x}{2},-2z\right).\] 
A fixed point of $\Phi_\rho$ must satisfy $S(F(j,k)) =:S_F(j,k)=S(j,k)$. Write $S=(x,y,z)$ and $d=j-k = (d_x,d_y,d_z)$. Substitution of the rotation matrices gives
\begin{center}
$z=0,\qquad d_x=-\sqrt3\,y,\qquad d_y=-x/\sqrt3.$
\end{center}
The unit-sphere constraints are
\begin{center}
$S\cdot d=0,\qquad |S|^2+|d|^2=4.$
\end{center}
They imply $xy=0$ and determine $d_z$ by $x,y$ up to sign. 

If $S=0$ the candidates are $p_\pm$, which $\tau_\rho F$ interchanges, so neither is fixed.

For $S=(0,\pm r,0)$, the admissible solutions are
\begin{center}
$j=\left(\mp\frac{\sqrt3r}{2},\ \pm\frac r2,\ -\sqrt{1-r^2}\right),\qquad k=R_{e_y}(\pi)j,$
\end{center}
where
\begin{center}\label{eq:yroot}
$\pi\chi_\rho(r)=2\arctan\frac{\sqrt3r}{2\sqrt{1-r^2}}.$
\end{center}
For $S=(\pm r,0,0)$, they are
\begin{center}
$j=\left(\pm\frac r2,\ \mp\frac{r}{2\sqrt3},\ \sqrt{1-r^2/3}\right),\qquad k=R_{e_x}(\pi)j,$
\end{center}
where
\begin{center}\label{eq:xroot}
$\pi\chi_\rho(r)=2\arctan\frac{r}{2\sqrt3\sqrt{1-r^2/3}}.$
\end{center}
The other signs of $d_z$ require angles in $(\pi,2\pi)$ and do not give solutions because the rotation angles of $\tau_\rho$ lies in $[0,\pi]$. In each scalar equation the left side is nonincreasing and the right side is strictly increasing. At $r=5\rho/4$ the left side is larger, and at $r=7\rho/4$ it vanishes. Each equation therefore has one root, with
\begin{center}
$5/4<r/\rho<7/4.$
\end{center}
The two sign choices on each axis give four points. Then we prove that each of the four fixed points of $\Phi_\rho=\tau_\rho\circ F$ is nondegenerate.

Thus every fixed point lies in $P^{-1}(0)$. At each of the four points, exactly one of $x,y$ is nonzero and $d_z\ne0$. Differentiating $P=0$ together with the constraints $S\cdot d=0$ and $|S|^2+|d|^2=4$ shows that $DP$ has rank three on $T(S^2\times S^2)$. Consequently, $P^{-1}(0)$ is locally a smooth curve $\gamma(r)$ parametrized by $r=|S|$, with
\[\ker DP_{\gamma(r)}=\mathbb{R}\gamma'(r).\]

Along this curve, the oriented axis $\mathbf e=S/r$ is constant. Let $\alpha(r)$ denote the right-hand side of the previous fixed-point equation, and set
\[f(r):=\pi\chi_\rho(r)-\alpha(r).\]
The explicit formulas for $\alpha$ give $\alpha'(r)>0$, whereas $\chi_\rho'(r)\le0$. Hence $f'(r)<0$, so its zero $r_*$ is transverse.

Writing $\mathcal R_{\mathbf e,\theta}$ for simultaneous rotation of both sphere factors, the rotation calculation gives
\[\Phi_\rho(\gamma(r))=\mathcal R_{\mathbf e,f(r)}\gamma(r).\]
At the fixed point $p=\gamma(r_*)=(j,k)$, differentiation therefore yields
\[(D\Phi_\rho(p)-I)\gamma'(r_*)=f'(r_*)\,(\mathbf e\times j,\mathbf e\times k)\ne0.\]
The last inequality follows from $f'(r_*)\ne0$ and $|\mathbf e\times j|^2=1-r_*^2/4>0$.

Finally, if $(D\Phi_\rho(p)-I)\xi=0$, then
\[DP_p\xi=dS_p(D\Phi_\rho(p)-I)\xi=0.\]
Thus $\xi$ is a multiple of $\gamma'(r_*)$, and the preceding calculation forces that multiple to vanish. Therefore $\ker(D\Phi_\rho(p)-I)=0$, proving nondegeneracy.
\end{proof}
\begin{remark}
    The nondegeneracy asserted here concerns only the starting map. The fixed point of the final map we obtain are degenerate.
\end{remark}

\subsection{Cotangent coordinates and the limiting generating function}

Near $\{(j,-j):j\in S^2\}$, set
\begin{center}
$n=\frac{j-k}{|j-k|},\qquad \eta=(j+k)\times n,\qquad S=n\times\eta.$
\end{center}
$n$ parametrize the anti-diagonal sphere in M and $\eta$ represent the cotangent direction, $S$ here is an auxiliary vector.
We can identify these with cotangent coordinates on a disk bundle in $T^*S^2$, using the brackets $\{n_i,n_j\}=0$ and $\{n_i,S_j\}=\varepsilon_{ijk}n_k$. On $n_z\ne0$, take $q=(n_x,n_y)$ and its cotangent coordinate $p=(p_1,p_2)$. More precisely, $p$ is defined by $\eta \cdot dn = p_1dq_1+p_2dq_2$. Then
\begin{center}
$S_x=-n_zp_2,\qquad S_y=n_zp_1,\qquad S_z=q_1p_2-q_2p_1,$\\
$r^2=|p|^2-(q\cdot p)^2.$
\end{center}
The choices $n_z=\pm\sqrt{1-|q|^2}$ give the two charts.

Rescale $(q,p)=\rho(u,v)$, and define
\begin{center}
$h(R)=-\pi\int_R^2\kappa(s)\,\dd s.$
\end{center}
so that the Hamiltonian of $\tau_\rho$ is $H_\rho(r) = -\pi\int_r^{2\rho}\chi_\rho(s)ds = \rho^2h(r/\rho) $ on compact subannuli of $1<r/\rho<2$(our fixed points satisfy $\frac{5}{4}<r/\rho<\frac{7}{4}$, thus lie in this annuli), the rescaled twist Hamiltonian is
\begin{center}
$h_\rho(u,v)=h\left(\sqrt{|v|^2-\rho^2(u\cdot v)^2}\right).$
\end{center}
On fixed bounded regions where these coordinates apply, it converges smoothly to $h(|v|)$, whose time-one map is
\begin{center}
$(u,v)\longmapsto(u+\nabla h(v),v).$
\end{center}
Here and below $h(v)$ abbreviates $h(|v|)$. The radius is constant along the twist flow. By allowing a larger fixed bounded coordinate region of $u$ first and shrinking $\rho$ afterwards, the finite trajectories remain in the chart.

The linear blocks used here for the limit of $F$ near $p_+$ are
\begin{center}
$\begin{pmatrix}U\\V\end{pmatrix}
=\begin{pmatrix}A&B\\C&D\end{pmatrix}\begin{pmatrix}u\\v\end{pmatrix},$
\end{center}
with

\begin{align}
   A=D&=\operatorname{diag}(-1/2,1/2), \nonumber\\
B&=\operatorname{diag}(\sqrt3/4,-\sqrt3/4), \nonumber\\
C&=\operatorname{diag}(-\sqrt3,\sqrt3). \nonumber
\end{align}

Near $p_-$ the blocks $B,C$ change sign. The limiting map for $\Phi_\rho$ is this linear map followed by the shear above. With input position $u$ and output momentum $V$ as independent variables, its generating function(see \cite[Sections 4.1--4.2]{Cannas2008}) is
\begin{center}\label{eq:S0}
$S_0(u,V)=-\tfrac12u^TD^{-1}Cu+u^TD^{-1}V+\tfrac12V^TBD^{-1}V+h(V).$
\end{center}
The defining relations are
\begin{center}
$v=S_{0,u},\qquad U=S_{0,V}.$
\end{center}
Consequently fixed points are critical points of
\begin{center}
$W_0(u,V)=S_0(u,V)-u\cdot V.$
\end{center}
Indeed $W_{0,u}=v-V$ and $W_{0,V}=U-u$. The relevant Hessians are
\begin{center}
$S_{0,uV}=D^{-1}=\operatorname{diag}(-2,2),\qquad
W_{0,uu}=\begin{cases}-2\sqrt3 I,&p_+\text{ chart},\\+2\sqrt3 I,&p_-\text{ chart}.
\end{cases}$
\end{center}

The equation $W_{0,u}=0$ uniquely eliminates $u$. Replace $u$ by a representation of $V = (V_1,V_2)$. Substitution yields
\begin{align}
g_+(V)&=h(V)+\frac{\sqrt3}{2}V_1^2-\frac{1}{2\sqrt3}V_2^2,\nonumber\\
g_-(V)&=h(V)-\frac{\sqrt3}{2}V_1^2+\frac{1}{2\sqrt3}V_2^2.\nonumber
\end{align}
and the fixed points correspond to the critical points of $g_\pm$. These functions describe the limiting model, not yet the map at a positive value of $\rho$.

Let  $R=|V|>0$, and $(h_{ij})=D_V^2h(|V|)$. In the order $(u_1,u_2,V_1,V_2)$, the full Hessian is
\[
D^2W_{0,\pm}=\begin{pmatrix}
\mp2\sqrt3&0&-3&0\\
0&\mp2\sqrt3&0&1\\
-3&0&h_{11}\mp\sqrt3/2&h_{12}\\
0&1&h_{12}&h_{22}\mp\sqrt3/2
\end{pmatrix},\qquad
(h_{ij})=\frac{h'(R)}R I_2+\left(h''(R)-\frac{h'(R)}R\right)\frac{VV^T}{R^2}.
\]
In particular, the $VV$ block includes both the quadratic contribution $\mp(\sqrt3/2)I_2$ and the radial Hessian, with $h'=\pi\kappa$ and $h''=\pi\kappa'\leq0$.

Because
\[ g_\pm(V):=W_{0,\pm}(u_{0,\pm}(V),V) \]
Writing $\operatorname{ind}$ for the number of negative eigenvalues counted with multiplicity, Sylvester's law of inertia yields
\[\operatorname{ind}D^2W_{0,\pm}=\operatorname{ind}W_{0,\pm,uu}+\operatorname{ind}D^2g_\pm. \]
 At the two relevant critical points of $g_+$, $V=(0,\pm R_+)$ and $h'(R_+)/R_+=1/\sqrt3$; for $g_-$, $V=(\pm R_-,0)$ and $h'(R_-)/R_-=\sqrt3$. Thus
\[ D^2g_+=\operatorname{diag}\left(\frac4{\sqrt3},h''(R_+)-\frac1{\sqrt3}\right),\qquad D^2g_-=\operatorname{diag}\left(h''(R_-)-\sqrt3,\frac4{\sqrt3}\right). \]
Both reduced indices equal $1$, that means the critical points of $g\pm$ are exactly saddle points. Consequently,
\[ \operatorname{ind}D^2W_{0,+}=3,\qquad \operatorname{ind}D^2W_{0,-}=1. \]

Moreover, around $p_+$(similar for $p_-$) this result can be computed directly by 
\[
h_{11} = \frac{1}{\sqrt{3}},\qquad h_{12} = 0,\qquad h_{22} = h''(R_+)\leq 0 \qquad \text{at } V = (0,\pm R_+)
\]

Provided the stated $C^2$ convergence holds, these indices persist at the corresponding critical points of $W_{\rho,\pm}$ for sufficiently small $\rho>0$.

We want to remark there is some relation between the Hessian of $W_0$ and the Conley-Zehnder index of the fixed points. For a mixed generating function $v=S_u$, $U=S_V$, with $W=S-u\cdot V$, the fixed-point determinant identity in dimension four gives
\[ (-1)^{\mu_{\mathrm{CZ}}}=\operatorname{sgn}\det(I-D\Phi)=\operatorname{sgn}\frac{\det D^2W}{\det S_{uV}}. \]
Since $S_{0,\pm,uV}=\operatorname{diag}(-2,2)$ has negative determinant, the same holds for small $\rho$, and hence
\[ \mu_{\mathrm{CZ}}\equiv\operatorname{ind}D^2W_{\rho,\pm}+1\pmod2. \]

\section{Level arcs and relative saddle merging}\label{sec:merge}

In this section, we will do some operations on $g_\pm$ in local charts around $p_\pm$ to get some new functions with only one critical points in each chart.

\subsection{An embedded level arc between the saddles}
In polar coordinates both planar functions take the form
\begin{center}
$g(R,\vartheta)=h(R)+\tfrac12\lambda(\vartheta)R^2,\qquad -k\leq\lambda(\vartheta)\leq a,$
\end{center}
where $(k,a)=(1/\sqrt3,\sqrt3)$ for $g_+$ and $(\sqrt3,1/\sqrt3)$ for $g_-$. In the relevant annulus the critical points lie on the negative quadratic axis, at the unique radius $R_s$ satisfying
\begin{center}
$\pi\kappa(R_s)=kR_s.$ $\frac{5}{4}<R_s<\frac{7}{4}$
\end{center}
The radial Hessian is $\pi\kappa'(R_s)-k<0$(by def of $\kappa$), and the transverse Hessian is positive. Thus both points are saddles, with common critical value
\begin{center}
$c_s=h(R_s)-\tfrac12 kR_s^2.$
\end{center}
For $1<R<R_s$, one has $\pi\kappa(R)/R>k$, so $\partial_Rg>0$. The estimates are
\begin{center}
$g(1,\vartheta)\leq-25\pi+\sqrt3/2<-77,\quad
h(R_s)\geq-kR_s(2-R_s),\quad c_s>-4.$\\
$g(1,\vartheta)<c_s$
\end{center}
Except along the two negative-axis directions, $g(R_s,\vartheta)>c_s$. Thus each nonaxis direction has a unique inner root of $g=c_s$. These roots form two inner arcs between the saddles. Choose one and extend it slightly beyond both endpoints along the same smooth level branches. The Morse lemma gives smoothness at the endpoints. A sufficiently small tubular neighborhood of the extended arc is an embedded disk $D$ with
\begin{center}
$\overline D\subset\{V:1<|V|<2\},$
\end{center}
containing no other critical points.

\subsection{Normalizing the function along the whole arc}
\begin{lemma}\label{lem:merge}
Suppose a smooth planar function has two nondegenerate saddles at the same critical value, joined by an embedded level arc whose interior consists of regular points of the function. In a sufficiently small disk neighborhood of the arc, there is a modification, unchanged near the boundary, with one isolated critical point of local form $t(t^2-s^2)$.
\end{lemma}
\begin{proof}
Subtract the common critical value. Parameterize the extended arc by $s$, with the saddles at $s=\pm1$, and choose tubular coordinates $(s,t)$ so that the arc is $t=0$. Its short extensions can be reparameterized to occupy any prescribed finite additional $s$-interval.

Since $f(s,0)=0$, Hadamard's lemma gives
\begin{center}
$f(s,t)=t\,a(s,t),\qquad$
$a(s,t)=\int_0^1 f_t(s,\xi t)\,\dd\xi.$
\end{center}
Write $A(s)=a(s,0)$. Along the arc,
\begin{center}
$f_s(s,0)=0,\qquad f_t(s,0)=A(s),\qquad \dd f(s,0)=A(s)\,\dd t.$
\end{center}
The hypothesis of interior regularity therefore implies $A(s)\ne0$ for $-1<s<1$. Since $A$ has constant sign in the connected interior, reverse $t$ if necessary to arrange $A>0$ there.

At either endpoint $s_0=\pm1$, criticality implies $A(s_0)=0$, while
\begin{center}
$\operatorname{Hess}_{(s_0,0)}f=
\begin{pmatrix}0&A'(s_0)\\A'(s_0)&2a_t(s_0,0)\end{pmatrix},\qquad
\det\operatorname{Hess}_{(s_0,0)}f=-A'(s_0)^2.$
\end{center}
Nondegeneracy forces $A'(s_0)\ne0$. Thus $A$ has simple zeros at the endpoints. The quotient
\begin{center}
$c(s)=\frac{A(s)}{1-s^2}$
\end{center}
extends smoothly across both endpoints. For example $A(s)=(s-1)B_+(s)$ near $1$, so $c(s)=-B_+(s)/(1+s)$. Since $A>0$ in the interior,
\begin{center}
$A'(-1)>0,\quad A'(1)<0,\qquad
c(-1)=A'(-1)/2>0,\quad c(1)=-A'(1)/2>0.$
\end{center}
The short extensions can be chosen so that $c$ stays positive on the full interval.

To see the transverse rescaling explicitly, denote the old transverse coordinate by $\zeta$. Using Hadamard's lemma again on $a(s,\zeta)$ with coordinate $\zeta$, it gives
\begin{center}
$f(s,\zeta)=\zeta(1-s^2)c(s)+\zeta^2r(s,\zeta).$
\end{center}
Set $t=c(s)\zeta$, a smooth coordinate change because $c>0$. Then, writing $f$ again for the function in the new coordinates,
\begin{center}\label{eq:normalpre}
$f(s,t)=t(1-s^2)+t^2b(s,t),\qquad
b(s,t)=c(s)^{-2}r(s,t/c(s)).$
\end{center}
Consider
\begin{center}
$f_\lambda(s,t)=t(1-s^2)+(1-\lambda)t^2b(s,t)+\lambda t^3,\quad0\leq\lambda\leq1.$
\end{center}
On a uniformly narrow tube, these functions have only the critical points $(\pm1,0)$. Indeed, away from $s=0$,
\begin{center}
$\partial_sf_\lambda=t[-2s+(1-\lambda)t b_s]$,
\end{center}
so criticality forces $t=0$, and $\partial_tf_\lambda=1-s^2$ when $t=0$. Near $s=0$, $\partial_tf_\lambda=1-s^2+O(t)>0$. At $(\pm1,0)$ the Hessian determinant is $-4$.

A homotopy of functions by itself does not establish coordinate equivalence. We seek a local isotopy $\phi_\lambda$ such that
\begin{center}\label{eq:moseridentity}
$f_\lambda\circ\phi_\lambda=f_0,\qquad\phi_0=\id.$
\end{center}
If $\dot\phi_\lambda=X_\lambda(\phi_\lambda)$, the chain rule shows that it is enough to solve
\begin{center}\label{eq:moser}
$\partial_\lambda f_\lambda+\dd f_\lambda(X_\lambda)=0.$
\end{center}
Write $\partial_\lambda f_\lambda=t^2B$, where $B=t-b(s,t)$. Near the endpoints,
\begin{center}
$\partial_sf_\lambda=tA_\lambda,\qquad
A_\lambda=-2s+(1-\lambda)t b_s\ne0.$
\end{center}
A smooth solution is
\begin{center}
$X_\lambda=\left(-\frac{tB}{A_\lambda},0\right).$
\end{center}
The factor $t$ has been canceled before division, so the field remains smooth even at the critical points. In the middle regular region, use
\begin{center}
$X_\lambda=\left(0,-\frac{t^2B}{\partial_tf_\lambda}\right),$
\end{center}
where the denominator is bounded away from zero on a sufficiently narrow tube.

Glue these solutions with a partition of unity depending on $s$.  The resulting field vanishes on $t=0$.

For completeness, take a compact subarc inside a slightly longer coordinate interval. On a fixed narrow tube, smoothness and vanishing on the arc give $|X_\lambda(s,t)|\leq C|t|$, uniformly in $\lambda$. Along a trajectory, $|t(\lambda)|\leq e^C|t(0)|$ and the total $s$-displacement is bounded by $Ce^C|t(0)|$. Shrinking the tube on which we use the flow of $X_\lambda$ therefore keeps all trajectories in the original tube until $\lambda=1$. The flow is a local diffeomorphism and fixes the arc pointwise.  Hence we can use a coordinate change around the arc such that the original function expressed in the coordinates given by $\phi_1$, has template
\begin{center}\label{eq:template}
$f_1(s,t)=t(t^2+1-s^2).$
\end{center}
More precisely $f_0\circ\phi_1^{-1}=f_1$ on the image neighborhood. This step only changes coordinates; it does not yet merge the saddles.

Then we will find a compactly supported modification of the template.
Choose an even smooth function $\beta$ equal to $1$ for $|t|\leq\varepsilon/2$, equal to $0$ for $|t|\geq\varepsilon$, and nonincreasing in $|t|$. Put
\begin{center}
$C=\max_t|\beta(t)+t\beta'(t)|.$
\end{center}
A fixed profile under rescaling makes $C$ independent of $\varepsilon$. Choose $L$ with $L^2>1+3\varepsilon^2+C$, and a cutoff $0\leq\alpha\leq1$ equal to $1$ for $|s|\leq L$ and to $0$ for $|s|\geq L+1$. For $0\leq b\leq1$ set
\begin{center}
$F_b(s,t)=t[t^2+1-s^2-b\alpha(s)\beta(t)].$
\end{center}
In $|s|\leq L$, $\partial_sF_b=-2st$. At $t=0$, criticality becomes $s^2=1-b$. At $s=0$, $t\ne0$,
\begin{center}
$\partial_tF_b=3t^2+1-b(\beta+t\beta')\geq3t^2+1-b\beta>0.$
\end{center}
In the transition region $|s|>L$, $|t|\leq\varepsilon$,
\begin{center}
$\partial_tF_b\leq3\varepsilon^2+1-L^2+C<0.$
\end{center}
For $|t|\geq\varepsilon$ the template is unchanged. Thus $b<1$ gives precisely two saddles, while $b=1$ gives one critical point, with germ $t(t^2-s^2)$ at the origin.

The modification is supported in the interior of the coordinate rectangle and can be transported back and extended by the original function. To avoid a circular choice of scales, fix the profile of $\beta$ and its constant $C$, choose $L$ using $\varepsilon\leq1$, parameterize the short extended arc over the required finite interval, perform the normal-form construction there, and finally choose $\varepsilon$ small enough that the closed support rectangle lies inside the resulting tube.
\end{proof}

Applying the lemma to the two functions gives $\widetilde g_\pm$ such that
\begin{center}
$\supp(\widetilde g_\pm-g_\pm)\Subset D,\qquad
\#\Crit(\widetilde g_\pm|_D)=1.$
\end{center}
The disks and coordinates may be chosen separately in the two charts.

\section{Generating functions for the actual map and exact correction}

The preceding saddle modification concerns the limit $\rho=0$. The purpose of this section is to construct a planar function $g_\rho$ for the actual map at $\rho>0$, whose critical points correspond to its fixed points in the selected graph chart.

Choose a slightly larger disk $D_1$ with
\begin{center}
$\overline D\subset D_1,\qquad \overline{D_1}\subset\{V:1<|V|<2\}.$
\end{center}
Fix a finite constant $L$, to be made sufficiently large later, and let
\begin{center}\label{eq:Omega}
$\Omega=B_{3L}(0)\times D_1.$
\end{center}
All small-$\rho$ assertions in this subsection are made after fixing $L$. Write the time-one map of $\Phi_\rho$ as
\begin{center}
$(u,v)\longmapsto\bigl(\mathcal U_\rho(u,v),P_\rho(u,v)\bigr).$
\end{center}
To parameterize its graph by input position $u$ and output momentum $V$, one must solve $P_\rho(u,v)=V$. In the limit,
\begin{center}
$P_0(u,v)=Cu+Dv,\qquad v_0(u,V)=D^{-1}(V-Cu).$
\end{center}
As $(u,V)$ ranges over the closure of a slightly enlarged version of $\Omega$, these solutions of $v$ remain in a fixed bounded set. On a fixed neighborhood of this family, the smooth convergence of the local maps gives
\begin{center}
$P_\rho(u,v)=Cu+Dv+E_\rho(u,v),\qquad E_\rho\longrightarrow0\text{ in }C^\infty.$
\end{center}

Fix a small radius $\delta>0$. The momentum equation is equivalent to the fixed-point equation
\begin{center}
$v=T_{\rho,u,V}(v):=v_0(u,V)-D^{-1}E_\rho(u,v).$
\end{center}
For sufficiently small $\rho$, uniformly over the parameter domain and the balls $|v-v_0(u,V)|\leq\delta$,
\begin{center}
$|D^{-1}E_\rho|\leq\delta/2,\qquad
\|D^{-1}\partial_vE_\rho\|\leq1/2.$
\end{center}
Thus $T_{\rho,u,V}$ is a contraction of the closed ball into itself and has a unique fixed point $v_\rho(u,V)$. Moreover,
\begin{center}
$\partial_vP_\rho=D\bigl(I+D^{-1}\partial_vE_\rho\bigr)$
\end{center}
is invertible. The implicit function theorem makes $v_\rho$ smooth, and uniqueness glues its local descriptions on overlaps. Define
\begin{center}
$\widehat U_\rho(u,V)=\mathcal U_\rho(u,v_\rho(u,V)).$
\end{center}
The graph chart is now parameterized by
\begin{center}
$(u,V)\longmapsto(u,v_\rho(u,V);\widehat U_\rho(u,V),V).$
\end{center}
Consider the one-form
\begin{center}
$\alpha_\rho=v_\rho\cdot\dd u+\widehat U_\rho\cdot\dd V.$
\end{center}
Symplecticity of the actual map gives, on its graph,
\begin{center}
$\sum_i\dd u_i\wedge\dd(v_\rho)_i
=\sum_i\dd(\widehat U_\rho)_i\wedge\dd V_i.$
\end{center}
It follows that $\dd\alpha_\rho=0$. Since $\Omega$ is a ball times a disk, it is contractible, and the closed one-form has a primitive:
\begin{center}\label{eq:Srho}
$\dd S_\rho=\alpha_\rho,\qquad S_{\rho,u}=v_\rho,\qquad S_{\rho,V}=\widehat U_\rho.$
\end{center}
This proves existence of a second-kind generating function on the whole mixed domain.

Normalize $S_\rho$ at a common base point to agree there with $S_0$. Smooth convergence of the graph parameterizations and integration of their differentials give
\begin{center}
$S_\rho\longrightarrow S_0\qquad\text{in }C^\infty$
\end{center}
on the fixed compact regions under consideration.
Set
\begin{center}
$W_\rho(u,V)=S_\rho(u,V)-u\cdot V.$
\end{center}
Then
\begin{center}
$W_{\rho,u}=v_\rho-V,\qquad W_{\rho,V}=\widehat U_\rho-u.$
\end{center}
Thus $\dd W_\rho=0$ is precisely the fixed-point equation in this chart.

The matrix $W_{\rho,uu}$ remains definite, tending to $\mp2\sqrt3 I$. For each $V\in\overline{D_1}$, the limiting equation has solution
\begin{center}
$u_0(V)=-H^{-1}(D^{-1}-I)V,\qquad H=W_{0,uu}.$
\end{center}
These solutions lie in a fixed bounded ball. The implicit function theorem, uniformly in $V$, gives nearby solutions $u_\rho(V)$. Choose $L$ to contain this family well inside its core. Definiteness on the convex $u$-ball gives uniqueness: suppose that \(W_{\rho,u}(u_1,V)=W_{\rho,u}(u_2,V)=0\). Set \(w=u_1-u_2\). Since the \(u\)-ball is convex, the segment \(u_2+tw\), \(0\leq t\leq1\), lies in the domain where \(W_{\rho,uu}\) is definite with a fixed sign. The fundamental theorem of calculus gives
\[
0=w^{T}\bigl(W_{\rho,u}(u_1,V)-W_{\rho,u}(u_2,V)\bigr)=\int_0^1 w^{T}W_{\rho,uu}(u_2+tw,V)w\,dt.
\]
If \(w\neq0\), the integrand is strictly positive everywhere or strictly negative everywhere, contradicting the equality. So $u_\rho = u_\rho(V)$ is a well defined smooth function.

Define the actual reduced function
\begin{center}\label{eq:grho}
$g_\rho(V)=W_\rho(u_\rho(V),V).$
\end{center}
By the chain rule,
\begin{center}
$\dd g_\rho=W_{\rho,u}(u_\rho(V),V)\cdot\dd u_\rho
+W_{\rho,V}(u_\rho(V),V)\cdot\dd V
=W_{\rho,V}(u_\rho(V),V)\cdot\dd V.$
\end{center}
Consequently
\begin{center}
$\Crit(g_\rho)\longleftrightarrow\Crit(W_\rho)
\longleftrightarrow\Fix(\Phi_\rho)$
\end{center}
in the selected chart.
Finally $g_\rho\to g_0$ smoothly, where $g_0$ means $g_+$ or $g_-$ in the respective chart.
Set $\delta g_0=\widetilde g_\pm-g_0$, and choose $\beta_0$ equal to $1$ on a neighborhood of $\overline D$ and compactly supported in $D_1$. Define
\begin{center}\label{eq:deltag}
$\delta g_\rho=\delta g_0+\beta_0(g_0-g_\rho).$
\end{center}
On $D$ this gives the exact identity
\begin{center}
$g_\rho+\delta g_\rho=\widetilde g_\pm.$
\end{center}
On the compact outer collar involved in the cutoff, $g_0$ has no critical points and its gradient has a positive lower bound. There the corrected function is $C^1$-close to $g_0$, so it has no new critical points. The supports are chosen so the same conclusion holds on the remaining unmodified part of $D_1$. Thus $g_\rho+\delta g_\rho$ has exactly one critical point in $D_1$.

\section{The generating-function deformation }

In this section, we just work backward from $g_\rho+\delta g_\rho$ we already have to get the Hamiltonian we need. Choose $\alpha_L(u)=\alpha_*(u/L)$, where the fixed smooth cutoff $\alpha_*$ is $1$ on the unit ball and $0$ outside the ball of radius $2$. Then
\begin{gather}
\alpha_L=1\ (|u|\leq L),\qquad\alpha_L=0\ (|u|\geq2L),\nonumber\\
\|D\alpha_L\|\leq C/L,\qquad\|D^2\alpha_L\|\leq C/L^2.\nonumber
\end{gather}
Set
\begin{center}\label{eq:deformation}
$S_{\rho,\theta}=S_\rho+\theta\alpha_L(u)\delta g_\rho(V),\qquad
W_{\rho,\theta}=S_{\rho,\theta}-u\cdot V,\quad0\leq\theta\leq1.$
\end{center}
 Moreover $\delta g_\rho\to\delta g_0$ in $C^2$. Fix
\begin{equation}
B=1+\max\{\|\delta g_{0,+}\|_{C^2},\|\delta g_{0,-}\|_{C^2}\}.
\end{equation}
The order of choices is
\begin{equation}\label{eq:order}
B\quad\longrightarrow\quad L_q\quad\longrightarrow\quad\rho.
\end{equation}
After the final choice, both corrections have $C^2$ norm at most $B$.

\begin{proposition}
Choose $L$ sufficiently large, then $\rho$ sufficiently small. Throughout the deformation the mixed Hessian $S_{\rho,\theta,uV}$ is invertible; there are no critical points of $W_{\rho,\theta}$ in $L\leq|u|\leq2L$; and at $\theta=1$ each modified chart has exactly one critical point.
\end{proposition}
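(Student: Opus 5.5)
The plan is to treat the three claims separately, each by a perturbation estimate. The deformation term is $\theta\alpha_L(u)\delta g_\rho(V)$. Its $C^2$ size is controlled by $B$ together with the decay bounds $\|D\alpha_L\|\le C/L$ and $\|D^2\alpha_L\|\le C/L^2$, and the choices are made in the order $B\to L\to\rho$ of \eqref{eq:order}.

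\emph{Invertibility of the mixed Hessian.} Differentiating gives
\[
S_{\rho,\theta,uV}=S_{\rho,uV}+\theta\,D\alpha_L(u)^T\,D\delta g_\rho(V).
\]
The first term converges to $S_{0,uV}=D^{-1}=\operatorname{diag}(-2,2)$ uniformly on $\Omega$. The correction has norm at most $CB/L$. Take $L$ large enough that $CB/L<1/4$, and then $\rho$ small enough that $\|S_{\rho,uV}-D^{-1}\|<1/4$. The resulting matrix is within $1/2$ of a matrix whose smallest singular value is $2$, so it is invertible. Hence the deformed generating function still defines, through $v=S_u$ and $U=S_V$, the graph of a symplectic map throughout the deformation.

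\emph{No critical points in the collar $L\le|u|\le2L$.} Here
\[
W_{\rho,\theta,u}=W_{\rho,u}+\theta\,D\alpha_L(u)\,\delta g_\rho(V),
\]
and the second term is at most $CB/L$. For the first term, use that $W_{0,u}=-D^{-1}Cu+(D^{-1}-I)V$ with $-D^{-1}C=\mp2\sqrt3\,I$. Since $V$ ranges over the bounded set $\overline{D_1}$, this gives $|W_{0,u}|\ge2\sqrt3|u|-C'\ge2\sqrt3L-C'$ on the collar. Then $|W_{\rho,u}-W_{0,u}|\to0$ uniformly on $\Omega$ as $\rho\to0$. Choosing $L$ large, and then $\rho$ small, makes $|W_{\rho,\theta,u}|>0$ on the collar for all $\theta$. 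No critical point can enter or leave through the collar during the deformation. The same estimate shows there are none with $|u|\ge2L$ inside $\Omega$, since there the function is just $W_\rho$.

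\emph{Exactly one critical point at $\theta=1$.} On $|u|\le L$ we have $\alpha_L=1$, so $W_{\rho,1}=W_\rho+\delta g_\rho(V)$. Its $uu$-Hessian equals $W_{\rho,uu}$, which is definite, and its $u$-gradient equals $W_{\rho,u}$. The elimination of $u$ in the previous section therefore applies verbatim: the same $u_\rho(V)$ is the unique solution, and it lies well inside $|u|\le L$ by the choice of $L$. The reduced function is $g_\rho+\delta g_\rho$, which has exactly one critical point in $D_1$ by the preceding section. Critical points of $W_{\rho,1}$ in $\{|u|\le L\}\times D_1$ correspond bijectively to critical points of the reduced function. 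Combined with the collar statement, each chart therefore has exactly one critical point.

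\emph{Main obstacle.} The delicate point is boundary control in $V$: we must ensure no critical points escape across $\partial D_1$. This holds because $\delta g_\rho$ is compactly supported in $D_1$ by the cutoff $\beta_0$. Near $\partial D_1$ the function $W_{\rho,\theta}$ therefore equals $W_\rho$, whose only critical points are the original four fixed points, and these lie inside $D$. A second point to keep straight is uniformity of all $\rho$-estimates, which requires $L$ to be fixed before $\rho$ is shrunk. This is exactly the order recorded in \eqref{eq:order}.
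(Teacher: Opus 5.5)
Your proposal is correct and follows the paper's proof essentially step for step. It uses the same three ingredients: the $CB/L$ bound on the rank-one correction to $S_{\rho,uV}$ near $D^{-1}=\operatorname{diag}(-2,2)$, the lower bound $2\sqrt3L-C_0-CB/L-o_\rho(1)$ on $|W_{\rho,\theta,u}|$ in the collar (taken in the order $L$ then $\rho$), and the observation that $\alpha_L\equiv1$ in the core, so $W_{\rho,u}$ and $u_\rho(V)$ are unchanged and the reduced function is exactly $g_\rho+\delta g_\rho$. Your extra remark that $\delta g_\rho$ is compactly supported in $D_1$, so nothing changes near $\partial D_1$, makes explicit a point the paper leaves implicit.
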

\begin{proof}
The mixed Hessian changes by
\begin{center}
$S_{\rho,\theta,uV}-S_{\rho,uV}
=\theta(D_u\alpha_L)(D_V\delta g_\rho)^T,$
\end{center}
whose norm is at most $CB/L$. Since $S_{\rho,uV}\to D^{-1}=\operatorname{diag}(-2,2)$, a sufficiently small perturbation remains invertible. Likewise the change in $W_{uu}$ is at most $CB/L^2$, so its definiteness persists.

In the transition region the limiting gradient is
\begin{center}
$W_{0,u}=Hu+(D^{-1}-I)V,\qquad H=\mp2\sqrt3 I.$
\end{center}
Because $V$ is in a fixed compact set, there is a constant $C_0$ independent of $L$ such that
\begin{center}\label{eq:gradientbound}
$|W_{\rho,\theta,u}|\geq2\sqrt3L-C_0-\frac{CB}{L}-o_\rho(1)>0.$
\end{center}
The term $o_\rho(1)$ is taken after fixing $L$. This excludes all critical points in the transition region. For $|u|\geq2L$ the function is unchanged; the same limiting-gradient estimate excludes critical points there within the parameter chart once $L$ is large and $\rho$ small.

In the core, $\alpha_L\equiv1$, so $D_u\alpha_L=0$ and
\begin{center}\label{eq:core}
$W_{\rho,\theta,u}=W_{\rho,u}.$
\end{center}
This equality holds componentwise for both $u_1$ and $u_2$: the added term $\theta\delta g_\rho(V)$ depends only on $V$. The $u$-critical equation therefore still has solution $u=u_\rho(V)$, which lies strictly inside the core. The remaining reduced function is exactly
\begin{center}
$W_{\rho,\theta}(u_\rho(V),V)=g_\rho(V)+\theta\delta g_\rho(V).$
\end{center}
At $\theta=1$, the preceding subsection proves that this has exactly one critical point in $D_1$.
\end{proof}

Replace $S_\rho$ by $S_{\rho,\theta}$ in the two mixed charts of the graph of $\Phi_\rho$. Since the correction is supported away from the parameter boundaries, the modified parameterizations agree with the original one on collars and glue smoothly. Use the original $M$ as the abstract parameter manifold, and denote the resulting map into $M\times M$ by
\begin{center}
$\iota_\theta=(p_\theta,q_\theta):M\longrightarrow M\times M.$
\end{center}
The first component records the input; the second records the output. In a mixed chart, we use the definition
\begin{center}\label{eq:iota}
$\iota_\theta(u,V)=\bigl(u,S_{\rho,\theta,u}(u,V);S_{\rho,\theta,V}(u,V),V\bigr).$
\end{center}
and we can glue these two definitions in the collar part. At this point it is necessary to prove the image we get is single-valued. Namely, if $p_\theta(a) = p_\theta(b)$, then $q_\theta(a) = q_\theta(b)$. On the product use
\begin{center}
$\Omega_M=-\pi_1^*\omega+\pi_2^*\omega.$
\end{center}
In local coordinates,
\begin{align}
\frac{1}{\rho^2}\iota_\theta^*\Omega_M
&=-\sum_i\dd u_i\wedge\dd(S_{u_i})+
\sum_i\dd(S_{V_i})\wedge\dd V_i\nonumber\\
&=\dd\left(\sum_iS_{u_i}\,\dd u_i+\sum_iS_{V_i}\,\dd V_i\right)
=\dd(\dd S)=0.\nonumber
\end{align}
The parameterization is an immersion, since $u$ and $V$ can be read back from its coordinates. It has dimension $4$, half the dimension of $M\times M$, and is therefore Lagrangian. Outside the modified charts it is the original Lagrangian graph, so this property holds globally.

\begin{proposition}
For every $\theta\in[0,1]$, $\iota_\theta(M)$ is the graph of a global symplectic diffeomorphism $\Phi_{\rho,\theta}$.
\end{proposition}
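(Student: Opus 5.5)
We argue by a continuity method in $\theta$, combined with the fact that $p_\theta:M\to M$ is a local diffeomorphism which is the identity outside a compact set of the parameter charts. Concretely, we show three things. First, $p_\theta$ is a local diffeomorphism for every $\theta$. Second, $p_\theta$ is a diffeomorphism of $M$. Third, $q_\theta\circ p_\theta^{-1}$ is a symplectic diffeomorphism whose graph is $\iota_\theta(M)$.

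\emph{Local invertibility of $p_\theta$.} In a mixed chart, $p_\theta(u,V)=(u,S_{\rho,\theta,u}(u,V))$. Its differential is block triangular with diagonal blocks $I$ and $S_{\rho,\theta,uV}$. The preceding Proposition shows that $S_{\rho,\theta,uV}$ is invertible throughout the deformation, since it is within $CB/L$ of a matrix converging to $\operatorname{diag}(-2,2)$. Hence $p_\theta$ is a local diffeomorphism in the charts. Outside the support of the correction, $p_\theta=p_0$ is the identity of the abstract parameter manifold $M$. In the same way, $q_\theta(u,V)=(S_{\rho,\theta,V},V)$ has invertible differential, so $q_\theta$ is also a local diffeomorphism.

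\emph{Global bijectivity.} A local diffeomorphism of the closed connected manifold $M$ is a covering map. It equals the identity on the nonempty open set outside the modified charts, so it has degree one. A degree-one covering is a diffeomorphism. The same argument applies to $q_\theta$. To make the covering argument robust, we also check directly that $p_\theta$ restricted to a chart is injective: $p_\theta(u,V)=p_\theta(u,V')$ forces $S_{\rho,\theta,u}(u,V)=S_{\rho,\theta,u}(u,V')$. For fixed $u$, the map $V\mapsto S_{\rho,\theta,u}(u,V)$ is $C^1$-close to the affine injective map $D^{-1}V-D^{-1}Cu$ on the convex $V$-region. A segment integral of the kind used for the uniqueness of $u_\rho(V)$ therefore gives $V=V'$, provided the $C^1$ error is smaller than the smallest singular value $2$. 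This is where the order of choices \eqref{eq:order} enters: fix $B$, then take $L$ large so that $CB/L<1$, then take $\rho$ small.

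\emph{Conclusion and main obstacle.} We set $\Phi_{\rho,\theta}=q_\theta\circ p_\theta^{-1}$. It is a diffeomorphism, and its graph is $\iota_\theta(M)$, which gives the single-valuedness asked for before the statement. The computation $\iota_\theta^*\Omega_M=0$ shows $\Phi_{\rho,\theta}^*\omega=\omega$, so $\Phi_{\rho,\theta}$ is symplectic. Smooth dependence on $\theta$ follows from smooth dependence of $\iota_\theta$.

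The hardest step is the global one: showing that the modified chart does not fold onto points outside itself, that is, that points of the chart image $p_\theta(\text{chart})$ are not hit both from inside and outside the chart. The degree and covering argument handles this provided that $p_\theta$ is a local diffeomorphism everywhere, including on the gluing collars, and that the collars where the correction is supported map into regions where $p_0$ is already a chart. I would verify this by noting that $\operatorname{supp}(\alpha_L\,\delta g_\rho)$ is compact inside the mixed-chart domain. Moreover, $p_\theta$ moves points by at most $\sup|D_u(\alpha_L\delta g_\rho)|\le CB/L$ (times $\rho$ in unscaled coordinates). So $p_\theta$ is a proper map homotopic to the identity through local diffeomorphisms, and the degree-one conclusion follows.
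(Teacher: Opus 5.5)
Your proof is essentially the paper's: $p_\theta$ is a local diffeomorphism because $S_{\rho,\theta,uV}$ stays invertible, hence a finite covering of the compact manifold $M$; it has degree one because it is homotopic to $p_0=\id$ and is one-sheeted because its Jacobian sign is constant; then $\iota_\theta^*\Omega_M=0$ makes $q_\theta\circ p_\theta^{-1}$ symplectic. Three small slips do not affect this: agreeing with the identity on an open set fixes only the orientation sign, not the degree (a two-sheeted self-covering of $S^1$ can be the identity on an arc), so the homotopy is what gives degree one, and for $q_\theta$ that homotopy is to $q_0=\Phi_\rho$ rather than to the identity; and your chartwise segment check is superfluous, which matters because the tubular neighborhood $D_1$ of the curved level arc is not convex, so that check would not go through as written.
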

\begin{proof}
In a mixed chart,
\begin{center}
$p_\theta(u,V)=(u,S_{\rho,\theta,u}(u,V)),\qquad
Dp_\theta=\begin{pmatrix}I&0\\S_{\rho,\theta,uu}&S_{\rho,\theta,uV}\end{pmatrix}.$
\end{center}
The derivative is invertible. Outside the charts the projection is unchanged and locally invertible as well, and they can glue together in the collar part. Thus $p_\theta:M\to M$ is a local diffeomorphism everywhere.

Since $M$ is compact, this map is proper. Its image is both open and closed in connected $M$, so it is surjective, and a proper local diffeomorphism is a finite covering. Initially $p_0=\id_M$ under the chosen abstract parameterization. Hence $p_\theta$ is homotopic to the identity and has degree $1$. Its derivative stays nonsingular throughout the isotopy, so its orientation sign remains positive. Each sheet therefore contributes $+1$ to the degree, and the covering has exactly one sheet. Thus $p_\theta$ is a global diffeomorphism.

Define
\begin{center}
$\Phi_{\rho,\theta}=q_\theta\circ p_\theta^{-1}.$
\end{center}
Every input now has exactly one output, and the parameterized relation is precisely its graph; in particular the immersion has no self-intersections. The Lagrangian identity says
\begin{center}
$p_\theta^*\omega=q_\theta^*\omega
=p_\theta^*(\Phi_{\rho,\theta}^*\omega).$
\end{center}
Since $p_\theta$ is a diffeomorphism,
\begin{center}
$\Phi_{\rho,\theta}^*\omega=\omega.$
\end{center}
This first proves that $\Phi_{\rho,\theta}$ is a local symplectic diffeomorphism. To state its global invertibility explicitly, compactness again makes it a covering. It is homotopic to the original symplectic diffeomorphism $\Phi_\rho$, so it has degree $1$; symplecticity preserves orientation, and this covering also has one sheet. Thus it is a global symplectic diffeomorphism.
\end{proof}

The path $\Phi_{\rho,\theta}\Phi_{\rho,0}^{-1}$ is a symplectic isotopy starting at the identity. If $X_\theta$ is its generating vector field, then $\iota_{X_\theta}\omega$ is closed. Since
\begin{center}
$H^1(S^2\times S^2;\R)=0,$
\end{center}
it is exact. Its primitives may be normalized at a base point and chosen smoothly in $\theta$, giving a Hamiltonian isotopy. As $\Phi_{\rho,0}=\Phi_\rho$ is Hamiltonian, the endpoint is Hamiltonian too.

\section{The proof of Theorem1.1}

Now we are in the position to complete the proof of Theorem 1.1.

The construction of Section 5 places the four original fixed points in the two core charts. The relation outside the modified parameter regions agrees with the original graph. Fixed points are intersections with the diagonal, and hence correspond locally to critical points of $W_{\rho,1}$. We leaves one such point in each chart and none in the transition or outer parts. 
By the preceding propositions, the final count is therefore
\begin{center}
$\#\Fix(\Phi_{\rho,1})=2.$
\end{center}

Concatenate a Hamiltonian path from the identity to $\Phi_\rho$ with the deformation above. Smooth time reparameterizations constant near the joining times and endpoints give a smooth Hamiltonian vanishing near the endpoints of the unit interval. Periodic extension then realizes the endpoint map as the time-one map of a smooth $1$-periodic Hamiltonian.

\newpage
\bibliographystyle{plain}
\bibliography{ref.bib}
\end{document}